\definecolor{greenish}{RGB}{50,160,0}
\newtheorem{theorem}{Theorem}[section]
\newtheorem{lemma}[theorem]{Lemma}
\theoremstyle{definition}
\newtheorem{definition}[theorem]{Definition}
\theoremstyle{remark}
\newtheorem{remark}[theorem]{Remark}
\numberwithin{equation}{section}
\newcommand{\abs}[1]{\left|#1\right|}
\newcommand{\Div}{\textup{div}}
\newcommand{\id}{\textup{id}}
\newcommand{\R}{\mathbb{R}}
\begin{document}
\title[IMCF solitons with cylindrical ends]{Rotational symmetry of self-expanders to the inverse mean curvature flow with cylindrical ends}

\author[Gregory Drugan]{Gregory Drugan}
\address{Oregon Episcopal School, 6300 SW Nicol Road, Portland, OR 97223, USA}
\email{drugan.math@gmail.com}
\author[Frederick Fong]{Frederick Tsz-Ho Fong}
\address{Department of Mathematics, Hong Kong University of Science and Technology, Clear Water Bay, Kowloon, Hong Kong}
\email{frederick.fong@ust.hk}
\author[Hojoo Lee]{Hojoo Lee}
\address{Korea Institute for Advanced Study, Hoegiro 85, Dongdaemun-gu, Seoul 02455, Korea}
\email{momentmaplee@gmail.com}

\begin{abstract}
We show that any complete, immersed self-expander to the inverse mean curvature 
flow, which has one end asymptotic to a cylinder, or has two ends asymptotic to two coaxial cylinders, must be rotationally symmetric.
\end{abstract}

\keywords{cylindrical ends, evolution of complete non-compact hypersurfaces, inverse mean curvature flow.}
 
 \maketitle
 


 \section{Introduction}

In this paper, we establish the following rigidity result for non-compact, complete self-expanders to the inverse mean curvature flow.

\begin{theorem}
\label{thm:main:intro}
Let $F:\Sigma^{n \geq 2} \to \mathbb{R}^{n+1}$ be a complete, immersed self-expander to the inverse mean curvature flow, and suppose either:
\begin{itemize}
\item $F(\Sigma)$ has only one end which is asymptotic, in the sense of Defiition~\ref{asmp}, to a round cylinder; or
\item $F(\Sigma)$ has only two ends which are asymptotic, in the sense of Defiition~\ref{asmp}, to two coaxial round cylinders.
\end{itemize}
Then, $F(\Sigma)$ must be rotationally symmetric with respect to the axis of the asymptotic cylinder(s).
\end{theorem}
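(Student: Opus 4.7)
I would approach this via the classical Alexandrov moving-planes method, adapted to non-compact immersed self-expanders with cylindrical ends. The IMCF self-expander condition is a quasilinear elliptic equation (schematically $H\langle F,\nu\rangle = n$ in a suitable normalization) that is invariant under every Euclidean isometry fixing the origin; in particular it is preserved by reflection across any hyperplane containing the axis $L$ of the asymptotic cylinder(s). Since the group generated by such reflections is $O(2)$ acting on $L^\perp$, it suffices to produce, for every hyperplane through $L$, a reflection symmetry of $F(\Sigma)$.

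Concretely, after a rigid motion place $L$ along the $x_{n+1}$-axis. Fix a unit vector $v \in \mathbb{R}^n \times \{0\}$, set $P_t := \{x : v\cdot x = t\}$ with half-spaces $H_t^\pm$, and write $\Sigma_t^+ := F^{-1}(H_t^+)$ and $R_t$ for Euclidean reflection across $P_t$. The asymptotic cylindrical hypothesis (Definition~\ref{asmp}) together with properness of $F$ gives $\Sigma_t^+ = \emptyset$ for $t$ larger than the cylinder radius, and for $t$ slightly smaller the piece $R_t(F(\Sigma_t^+))$ is graphical over $P_t$ and sits strictly on the $H_t^-$-side of the remaining end near infinity. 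Decrease $t$ from $+\infty$ and define
\[
t_* := \inf\bigl\{t \geq 0 : R_s(F(\Sigma_s^+)) \text{ does not cross } F(\Sigma) \text{ for every } s \geq t \bigr\}.
\]
If $t_* = 0$, the mirror argument starting from $t\to -\infty$ yields the reverse inclusion, hence the desired reflection symmetry across $P_0$; since $v$ was arbitrary in the horizontal unit sphere, rotational symmetry about $L$ follows.

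The heart of the argument is ruling out $t_* > 0$. In that case, at $t = t_*$ the reflected surface $R_{t_*}(F(\Sigma_{t_*}^+))$ must make first contact with $F(\Sigma) \cap \overline{H_{t_*}^-}$, either at an interior tangency in $H_{t_*}^-$ or at a tangency point on $P_{t_*}$ whose tangent hyperplane contains $v$. Writing both surfaces locally as graphs over their common tangent hyperplane, the two height functions satisfy the same quasilinear elliptic PDE associated to $H\langle F,\nu\rangle = n$; by the standard subtraction trick their difference satisfies a homogeneous linear elliptic equation with bounded coefficients. The strong maximum principle (interior case) or Hopf's boundary lemma (boundary case) forces them to coincide in a neighborhood, and unique continuation plus connectedness propagate the identification to produce an exact reflection symmetry of $F(\Sigma)$ across $P_{t_*}$. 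But then the asymptotic cylinders, which are centered on $L \subset P_0$, would also have to be invariant under reflection across $P_{t_*} \neq P_0$, which is impossible. Hence $t_* = 0$.

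The main obstacle is adapting this framework to an \emph{immersed}, non-compact hypersurface. For compact embedded hypersurfaces the procedure is textbook, but here (i) self-intersections in the compact region complicate the notion of being "on the inside," so the ordering used to define $t_*$ must be made precise via local graphical comparisons rather than a global enclosed region, and (ii) non-compactness requires the asymptotic hypothesis to furnish both the initial graphical configuration for $t$ large and the uniform geometric control needed when passing to the critical value $t_*$. The two-ends case depends critically on coaxiality, so that both ends are simultaneously reflection-symmetric across every $P_t$ containing $L$ and therefore give consistent starting data for the moving plane from each end. I expect the bulk of the technical work to lie in formalizing this asymptotic comparison, and in confirming that the strong maximum principle still identifies the local graphs globally despite possible self-intersections away from the contact point.
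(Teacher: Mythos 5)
Your moving-planes strategy runs into two obstructions that I do not think can be repaired, and it is quite different from what the paper actually does. The most serious is that the self-expander equation $-\frac{1}{H} = C\langle F,\nu\rangle$ is \emph{not} translation invariant: it involves the position vector measured from the origin, so it is preserved only by reflections across hyperplanes \emph{through the origin}. For $t\neq 0$ the reflected piece $R_t(F(\Sigma_t^+))$ is not a self-expander --- a direct computation shows its support function picks up the extra term $-2t\langle v,\nu\rangle$ --- so the two local height functions at the putative first-contact point do \emph{not} satisfy the same PDE. The subtraction trick then produces an inhomogeneous term of size comparable to $t$, and the strong maximum principle / Hopf lemma comparison collapses. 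This is precisely why moving planes is the standard tool for CMC hypersurfaces and for translating solitons, but not for expander or shrinker equations centered at the origin; the only admissible reflections are those through the axis itself, which removes the parameter you need to ``move.'' The second obstruction you flag yourself but do not resolve: the theorem is stated for \emph{immersed} hypersurfaces (the motivating examples, the ``infinite bottles,'' have self-intersections), and for immersed surfaces there is no global ordering with which to define $t_*$ or to guarantee that first contact is a genuine one-sided tangency with consistently oriented normals. This is not a technicality to be ``formalized'' but the reason Alexandrov-type reflection fails for immersed surfaces in general (compare the Wente tori in the CMC setting).

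The paper's proof sidesteps both problems. It shows that the rotation function $\langle R,\nu\rangle$ and the support function $\langle F,\nu\rangle$ both lie in the kernel of the linearized operator $\mathcal{L}_\Sigma$ of the soliton equation (arising from rotational and scaling invariance, respectively), forms the quotient $h=\langle R,\nu\rangle/\langle F,\nu\rangle$, which satisfies a drift-Laplace equation with no zeroth-order term, and applies the strong maximum principle together with the uniform decay $h\to 0$ along the cylindrical end(s) to force $h\equiv 0$, i.e.\ $R$ tangent to $F(\Sigma)$, whence rotational symmetry. That argument requires no embeddedness and uses only symmetries that genuinely preserve the equation; if you wish to pursue your route you would need to replace the translated planes $P_t$ by some family of reflections fixing the origin, at which point the method essentially reduces to an infinitesimal (Jacobi-field) argument of the kind the paper employs.
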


An immersion $F:{\Sigma}^{n} \to \mathbb{R}^{n+1}$ from an $n$-dimensional orientable manifold $\Sigma$ into $\mathbb{R}^{n+1}$ is a \emph{self-expander} to the inverse mean curvature flow (IMCF) if there exists a constant $C>0$ so that 
\begin{equation}
\label{eq:IMCF_Expander}
 -\frac{1}{H} = C \langle F, \nu \rangle,
\end{equation}
where $H=-\Div_{{}_{\Sigma}} \nu$ is the scalar mean curvature induced by the unit normal vector field $\nu$ on $\Sigma$. The mean curvature $H$ is assumed to be non-zero everywhere on a self-expander to IMCF. In particular, the support function $\langle F, \nu \rangle$ is non-zero everywhere and equation~(\ref{eq:IMCF_Expander}) is equivalent to the nonlinear elliptic equation
\begin{equation}
\langle \Delta_\Sigma F, \nu \rangle = -\frac{1}{C \langle F, \nu \rangle},
\end{equation}
where $\Delta_\Sigma$ is the Laplace-Beltrami operator induced on $\Sigma$ by the immersion $F$.

The round spheres ($\mathbb{S}^n$) and round cylinders ($\mathbb{S}^{n-1} \times \mathbb{R}$) are examples of self-expanders to IMCF in ${\mathbb{R}}^{n+1}$. Gerhardt \cite{G1990} and Urbas \cite{U1990} showed that compact, star-shaped, initial surfaces with strictly positive mean curvature converge under IMCF, after suitable rescaling, to a round sphere. Recently, the first named author, G. Wheeler, and the third named author \cite{DLW2015} proved that round spheres are the only closed self-expanders. While the round spheres are rigid among the compact self-expanders, there is no such rigidity for the round cylinders. Even in the class of hypersurfaces with rotational symmetry, there are known examples of non-compact, complete self-expanders different than the round cylinders. G. Huisken and T. Ilmanen \cite{HI1997} constructed a complete, rotationally symmetric, self-expander with one end which is asymptotic to a cylinder (see \cite[p. 10]{HI1997} for a graphical plot). In \cite{DLW2015}, examples of complete, rotationally symmetric, expanding topological cylinders with two different ends, called \textit{infinite bottles}, were constructed (see Figure \ref{fig_immersed}). In view of these known examples of rotationally symmetric self-expanders to IMCF with asymptotically cylindrical ends, the conclusion of Theorem~\ref{thm:main:intro} is essentially sharp. This is in contrast to L. Wang's uniqueness result \cite{Wang2016} for \emph{self-shrinkers} to the \emph{mean curvature flow} with asymptotically cylindrical ends.

 \begin{figure}
 \centering
 \includegraphics[height=2.65cm]{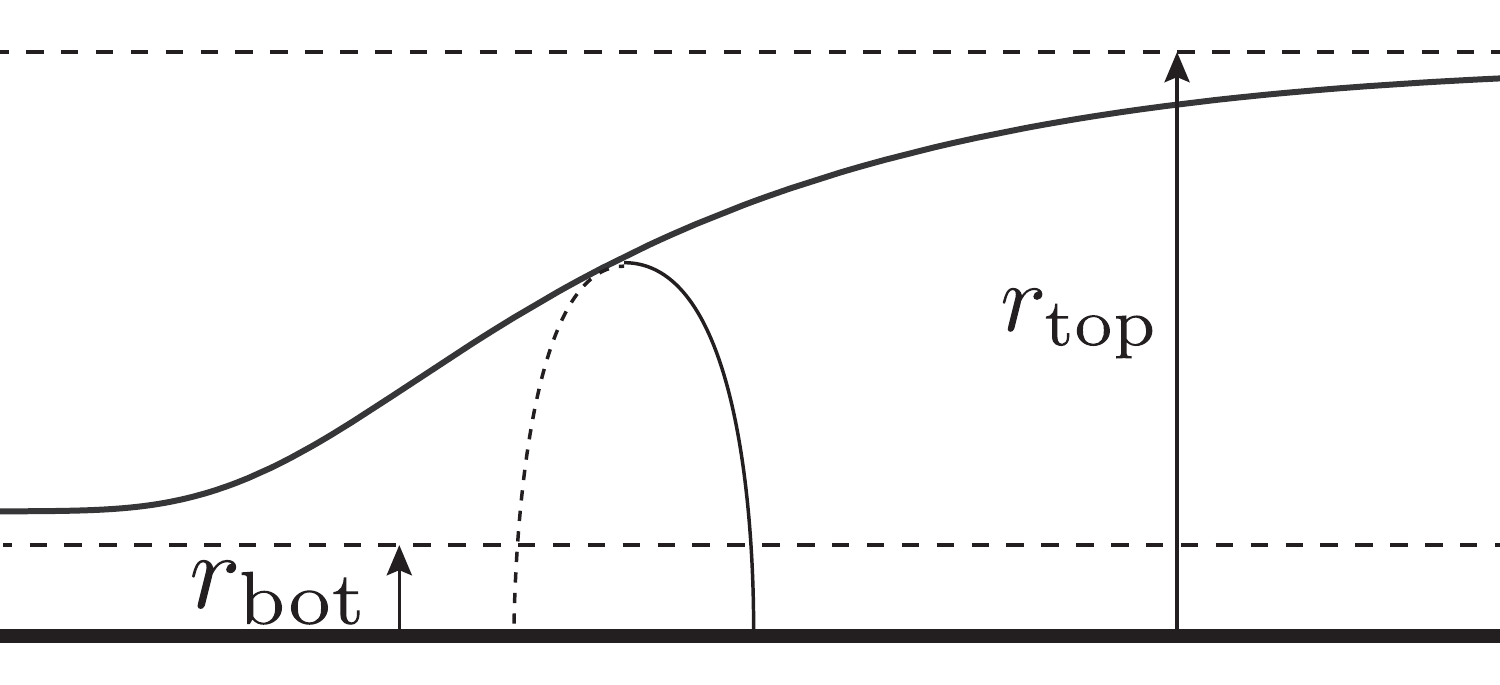}
 \caption{A numerical approximation of the part of a curve whose rotation about the horizontal axis is a self-expanding \textit{infinite bottle} in ${\mathbb{R}}^{3}$.}
 \label{fig_immersed}
 \end{figure}

Our proof of Theorem~\ref{thm:main:intro} is essentially motivated by the role of the stability operator in the classical theory of hypersurfaces with constant mean curvature. For instance, 
L. J. Al\'{i}as,  R. L\'{o}pez, and B. Palmer \cite{ALP1999} exploited the Jacobi field induced by the rotation vector field to show that 
the only stable constant mean curvature surfaces of genus zero in ${\mathbb{R}}^{3}$ with circular boundary are the spherical caps or the flat disks.
In our situation, we construct two independent kernel functions of the linearized operator of the soliton equation, induced by rotations and dilations, respectively. This is done in Section~\ref{sec:kernel}. Then, we utilize these kernel functions and the asymptotic behavior of our solitons in a maximum principle argument in Section~\ref{sec:main} to establish the rotational rigidity. Our proof is also motivated by recent results on the rigidity of various geometric solitons, such as 
steady Ricci solitons by S. Brendle \cite{Brendle2013, Brendle2014}, expanding Ricci solitons by O. Chodosh \cite{Ch2014}, 
K\"{a}hler-Ricci solitons by O. Chodosh and the second named author \cite{CF2016}, shrinking Ricci solitons by B. Kotschwar 
and L. Wang \cite{KL2015}, shrinkers to the mean curvature flow by M. Rimoldi \cite{Rim2014}, and  translators to the mean curvature flow by R. Haslhofer \cite{Haslhofer2015}. Finally, we mention the following references for background on the inverse mean curvature flow and its intriguing geometric applications: Huisken-Ilmanen \cite{HI1997}, Huisken-Ilmanen \cite{HI2001}, Bray-Neves \cite{BN2004}, Akutagawa-Neves \cite{AN2007}, Kwong-Miao \cite{KM2014}, Li-Wei  \cite{LW2015}, Brendle-Hung-Wang \cite{BHT2016}, Guo-Li-Wu \cite{GLW2016}, Lambert-Scheuer \cite{LS2016}, and Allen \cite{Al2016}.


 \section{Kernel of the linearized operator of  the soliton equation}
 \label{sec:kernel}

Let $F:\Sigma^n \to \mathbb{R}^{n+1}$ be an immersion from an $n$-dimensional manifold $\Sigma$ into $\mathbb{R}^{n+1}$, and let $\nu$ denote a continuous choice of unit normal along the hypersurface $F(\Sigma)$. Also, let $R$ be a rotation vector field about the axis of a round cylinder. We will first establish in Lemma \ref{kernel} that the inner products $\langle R, \nu \rangle$ and $\langle F, \nu\rangle$ both satisfy the same elliptic equation. The main result of this paper will then be proved by using the strong maximum principle.

To begin, we observe that given any self-expander $F$ to the IMCF, the induced one-parameter family $\left\{F_t := e^{Ct}F \right\}_{t \in \mathbb{R}}$ satisfies  IMCF. Indeed, we have  
\[
\left(\frac{\partial F_t}{\partial t}\right)^{\perp} = C e^{Ct}  \langle F, \nu \rangle \nu= - \frac{ 1 }{ e^{-Ct} H} \nu  = -\frac{1}{H_t} {\nu}_t. 
\]
Here, $H_t$ and $\nu_t$ denote the mean curvature and unit normal of the immersion $F_t$, respectively, and $\perp$ denotes the normal projection.

Noting that the mean curvature and unit normal are both invariant under re-parametrization, we can re-parametrize $\Sigma$ by some (time-dependent) tangential diffeomorphism $\Phi_t$ with $\Phi_0 = \id$ such that $F_t \circ \Phi_t$ satisfies the IMCF:
\[\frac{\partial}{\partial t}\left(F_t \circ \Phi_t\right) = -\frac{1}{H_t \circ \Phi_t}(\nu_t \circ \Phi_t).\]
With a little abuse of notations, we replace $F_t$ by $F_t \circ \Phi_t$ so that $F_t$ satisfies
\begin{equation}
\label{eq:IMCF}
\frac{\partial F_t}{\partial t} = -\frac{1}{H_t}\nu_t.	
\end{equation}

To investigate the self-shrinkers to the mean curvature flow, Colding and Minicozzi  \cite{CM2012} studied the linearized operator of the self-shrinker equation.
Here, we construct two kernel functions of the linearized operator of the self-expander equation to the inverse mean curvature flow. We will make use of the following well-known evolution equations:

\begin{lemma}[c.f. \cite{HP1996}]
\label{lma:Evolutions}
Let $F_{t}(p): \Sigma \times (-\varepsilon, \varepsilon) \to \R^{n+1}$ be a smooth one-parameter family of immersions in $\mathbb{R}^{n+1}$ satisfying the normal evolution 
\begin{equation}  \label{normalevolution}
 \frac{\partial {F}_{t}}{\partial t} = {\varphi}_{t} \, {\nu}_{t},
\end{equation}
where ${\varphi}_{t}$ is a smooth function and ${\nu}_{t}$ is the unit normal. We have the evolution equations
\begin{enumerate}
\setlength\itemsep{0.5em}
	\item $\displaystyle{\frac{\partial  }{\partial t} H_{t}  = \left(\Delta_\Sigma + \abs{A}^2\right) {\varphi}_{t}}$,
	\item $\displaystyle{\frac{\partial  }{\partial t} {\nu}_{t}  = -\nabla_\Sigma {\varphi}_{t}}$,
 \end{enumerate}
 where we adopt the notations $\Delta_\Sigma = \Delta_{F_{t}}$, $\nabla_\Sigma=\nabla_{F_{t}}$, and the second fundamental form $A=A_{F_{t}}$.
\end{lemma}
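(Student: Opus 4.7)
The plan is to argue in local coordinates $\{x^i\}$ on $\Sigma$ and systematically differentiate in $t$ the defining identities of the geometric data: the unit normal condition, the induced metric $g_{ij}=\langle\partial_i F_t,\partial_j F_t\rangle$, and the second fundamental form $h_{ij}=\langle\partial_i\partial_j F_t,\nu_t\rangle$. The only external input is the normal evolution $\partial_t F_t=\varphi_t\nu_t$, together with the Gauss--Weingarten relations. I would prove (2) first and then bootstrap it to get (1).

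To establish (2), I would differentiate $|\nu_t|^2=1$ in $t$ to see that $\partial_t\nu_t$ is tangential, so $\partial_t\nu_t=a^i\partial_i F_t$ for some coefficients $a^i$. Then I would differentiate the orthogonality $\langle\nu_t,\partial_j F_t\rangle=0$ and use
\[
\partial_t(\partial_j F_t)=\partial_j(\varphi_t\nu_t)=(\partial_j\varphi_t)\nu_t+\varphi_t\,\partial_j\nu_t.
\]
The normal components cancel against $\partial_t\nu_t\perp\nu_t$, while the tangential components give $a^i g_{ij}=-\partial_j\varphi_t$. Hence $\partial_t\nu_t=-g^{ij}(\partial_j\varphi_t)\,\partial_i F_t=-\nabla_\Sigma\varphi_t$, which is exactly (2).

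With (2) in hand I would turn to (1). A short differentiation yields $\partial_t g_{ij}=2\varphi_t h_{ij}$, hence $\partial_t g^{ij}=-2\varphi_t h^{ij}$. Differentiating $h_{ij}=\langle\partial_i\partial_j F_t,\nu_t\rangle$, inserting (2) for $\partial_t\nu_t$, and decomposing $\partial_i\partial_j F_t$ into Christoffel plus normal components, I would obtain the formula
\[
\partial_t h_{ij}=\nabla_i\nabla_j\varphi_t-\varphi_t\,h_{ik}\,h^k{}_j.
\]
Writing $H_t=g^{ij}h_{ij}$ and combining, the cross term $-2\varphi_t h^{ij}h_{ij}$ from $\partial_t g^{ij}$ cancels against $-g^{ij}\varphi_t h_{ik}h^k{}_j$ up to a single remaining copy of $\varphi_t |A|^2$, producing $\partial_t H_t=(\Delta_\Sigma+|A|^2)\varphi_t$. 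The only real obstacle is sign-convention bookkeeping: the paper adopts the nonstandard convention $H=-\Div_\Sigma\nu$, so one must fix the sign of $h_{ij}$ consistently and track it through both the Weingarten relation and the trace identity $g^{ij}h_{ik}h^k{}_j=|A|^2$. No genuinely new idea is involved, which is why the authors simply cite the classical reference \cite{HP1996}.
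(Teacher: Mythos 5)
Your strategy is the standard first-variation computation, which is exactly what the cited reference carries out; the paper itself gives no proof of this lemma beyond pointing to Huisken--Polden, so yours is the ``real'' argument. Part (2) as you write it is complete and correct: differentiating $|\nu_t|^2=1$ shows $\partial_t\nu_t$ is tangential, and differentiating $\langle\nu_t,\partial_jF_t\rangle=0$ with $\partial_t\partial_jF_t=(\partial_j\varphi_t)\nu_t+\varphi_t\partial_j\nu_t$ gives $a^ig_{ij}=-\partial_j\varphi_t$, hence $\partial_t\nu_t=-\nabla_\Sigma\varphi_t$.

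For part (1), however, the three intermediate formulas you display are not mutually consistent under the single convention $h_{ij}=\langle\partial_i\partial_jF_t,\nu_t\rangle$ that you fix, and taken literally they do not produce the claimed cancellation. With that convention one has $\langle\partial_i\nu_t,\partial_jF_t\rangle=-h_{ij}$, so $\partial_tg_{ij}=2\varphi_t\langle\partial_i\nu_t,\partial_jF_t\rangle=-2\varphi_t h_{ij}$ and therefore $\partial_tg^{ij}=+2\varphi_t h^{ij}$, the opposite sign to what you wrote; on the other hand $\partial_th_{ij}=\nabla_i\nabla_j\varphi_t-\varphi_t h_{ik}h^k{}_j$ and $H_t=g^{ij}h_{ij}$ (since the paper's $H=-\Div_\Sigma\nu=-g^{ij}\langle\partial_i\nu,\partial_jF\rangle=g^{ij}h_{ij}$) are correct as written. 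Your combination gives $\partial_tH_t=-2\varphi_t|A|^2+\Delta_\Sigma\varphi_t-\varphi_t|A|^2=\Delta_\Sigma\varphi_t-3|A|^2\varphi_t$, which is wrong; with the corrected $\partial_tg^{ij}=+2\varphi_t h^{ij}$ the trace becomes $2\varphi_t|A|^2+\Delta_\Sigma\varphi_t-\varphi_t|A|^2=\left(\Delta_\Sigma+|A|^2\right)\varphi_t$, as claimed. So the flaw is not an ambient ``convention ambiguity'' to be waved at, but a definite mismatch between your stated definition of $h_{ij}$ and your formula for $\partial_tg_{ij}$; once the variation of the metric is matched to your second fundamental form, the rest of the argument goes through unchanged and reproduces the lemma in the paper's sign convention $H=-\Div_\Sigma\nu$.
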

\begin{proof}
See e.g. \cite[p. 52--53]{HP1996}.
\end{proof}

Next we consider two kernel functions associated to the self-expander, namely the \emph{rotation function} and the \emph{support function}. Let $R$ be a rotation vector field in $\mathbb{R}^{n+1}$ about an axis. For instance in $\mathbb{R}^3$, a rotation vector field about the $z$-axis is given by $R(x,y,z) = (-y,x,0)$. The rotation function is given by  $\langle R(F), \nu \rangle$ (or $\langle R, \nu\rangle$ in short) where $F$ is the immersion of the self-expander, and $\nu$ is the unit normal. The support function is given by $\langle F, \nu\rangle$.
 
\begin{lemma}[\textbf{Linearized operator of the soliton equation}]  \label{kernel}
Given an expander $F:\Sigma \to \mathbb{R}^{n+1}$ satisfying \eqref{eq:IMCF_Expander} for some $C>0$, we associate it with the second-order elliptic operator 
\begin{equation} \label{SOLoperator}
 {\mathcal{L}}_{\Sigma}  {\varphi} := \Delta_\Sigma  {\varphi}+ CH^2\left\langle F, \nabla_{\Sigma}  {\varphi} \right\rangle + \left(\abs{A}^2 - CH^2\right)  {\varphi}.
\end{equation}
Then, $\langle R, \nu\rangle$ and $\langle F, \nu\rangle$ are in the kernel of $\mathcal{L}_\Sigma$:
\begin{itemize}
\item \label{lma:Lf_R} \textbf{(Kernel induced by rotational invariance of the soliton equation)} Given a rotation vector field $R$ in $\mathbb{R}^{n+1}$, the rotation function $ \langle R, \nu \rangle: \Sigma \to \mathbb{R}$ satisfies  
\begin{equation}
\label{eq:Lf_R}
{\mathcal{L}}_{\Sigma} \langle R, \nu \rangle=0.
\end{equation}

\item \label{lma:L(1/H)} \textbf{(Kernel induced by homothetical invariance of the soliton equation)}  The support function  $\langle F, \nu \rangle: \Sigma \to \mathbb{R}$ satisfies 
\begin{equation}
\label{eq:L(1/H)}
{\mathcal{L}}_{\Sigma}  \langle F, \nu \rangle =0.
\end{equation}

\end{itemize}
\end{lemma}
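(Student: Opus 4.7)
The plan is to exploit two symmetries of the self-expander equation $\frac{1}{H} + C\langle F, \nu\rangle = 0$: rotational invariance about any axis through the origin, and homothetic invariance under $F \mapsto \lambda F$ (with $C$ unchanged). Each symmetry produces a one-parameter family of self-expanders, and the normal speed of the associated variation automatically lands in the kernel of the linearized operator $\mathcal{L}_\Sigma$.

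First I would derive a general linearization formula. For a normal variation $\partial_s F_s = \varphi_s\nu_s$, Lemma \ref{lma:Evolutions} gives
\[
\partial_s H_s = (\Delta_\Sigma + \abs{A}^2)\varphi_s, \qquad \partial_s \nu_s = -\nabla_\Sigma \varphi_s,
\]
so that
\[
\partial_s\!\left(\tfrac{1}{H_s}\right) = -\tfrac{1}{H_s^2}(\Delta_\Sigma + \abs{A}^2)\varphi_s, \qquad \partial_s \langle F_s, \nu_s\rangle = \varphi_s - \langle F_s, \nabla_\Sigma \varphi_s\rangle.
\]
Combining these and multiplying by $-H^2$, I obtain on the original self-expander the clean identity
\[
-H^2\, \partial_s\!\left(\tfrac{1}{H_s} + C\langle F_s, \nu_s\rangle\right)\!\Big|_{s=0} \;=\; \mathcal{L}_\Sigma \varphi.
\]

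Second, I would produce the two variations of interest. For the rotation kernel, take $F_s = R_s \circ F$, where $R_s$ is the one-parameter group of ambient rotations generated by $R$ (whose axis passes through the origin, as is forced by the existence of a cylindrical self-expanding end). Since $R_s$ is a Euclidean isometry fixing the origin, both $H$ and $\langle F, \nu\rangle$ are preserved pointwise, so every $F_s$ satisfies the self-expander equation with the same $C$. For the dilation kernel, take $F_s = e^s F$; the scalings $H_{F_s} = e^{-s}H$ and $\langle F_s, \nu_{F_s}\rangle = e^s\langle F,\nu\rangle$ show immediately that each $F_s$ is a self-expander with the same $C$. In both cases the left-hand side of the identity above vanishes identically in $s$. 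The normal component of $\partial_s F_s|_{s=0}$ is $\langle R, \nu\rangle$ in the rotation case and $\langle F, \nu\rangle$ in the dilation case, so substituting these as the normal speed $\varphi$ yields $\mathcal{L}_\Sigma \langle R, \nu\rangle = 0$ and $\mathcal{L}_\Sigma \langle F, \nu\rangle = 0$.

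The one technical issue to handle is that neither $\partial_s F_s|_{s=0} = R(F)$ nor $\partial_s F_s|_{s=0} = F$ is a purely normal variation, while Lemma \ref{lma:Evolutions} is formulated for normal speeds. This is resolved exactly as the excerpt already does for the IMCF family $F_t = e^{Ct}F$: absorb the tangential component into a time-dependent diffeomorphism $\Phi_s$ of $\Sigma$ and apply Lemma \ref{lma:Evolutions} to $F_s \circ \Phi_s$. Since $H$, $\nu$, and $\langle F, \nu\rangle$ are invariant under tangential reparametrization, the soliton equation and the linearization identity are unaffected, and only the normal component of the velocity survives in the computation.
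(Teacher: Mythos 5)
Your proposal is correct and follows essentially the same route as the paper: linearizing the soliton identity $\tfrac{1}{H}+C\langle F,\nu\rangle=0$ along the rotation and dilation families via the evolution equations of Lemma~\ref{lma:Evolutions}, after absorbing the tangential part of the velocity into a reparametrization. The computation $-H^2\,\partial_s\bigl(\tfrac{1}{H_s}+C\langle F_s,\nu_s\rangle\bigr)\big|_{s=0}=\mathcal{L}_\Sigma\varphi$ and the identification of the normal speeds $\langle R,\nu\rangle$ and $\langle F,\nu\rangle$ match the paper's argument exactly.
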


 \begin{proof} Our variational proofs illustrate that $\mathcal{L}_{\Sigma}$ is the linearized operator of the soliton equation. (For other approaches, see Remark \ref{general equality}). We shall prove \eqref{eq:Lf_R} and \eqref{eq:L(1/H)} on a sufficiently small neighborhood of any point on the surface. Observe that, for any normal 
deformation $\frac{\partial {F}_{t}}{\partial t} = {\varphi}_{t} \, {\nu}_{t}$ with $F_0=F$, letting ${\varphi}={\varphi}_{0}$, equation (\ref{normalevolution}) and identity (2) in Lemma \ref{lma:Evolutions} yield  
\begin{equation} \label{variation of support function}
   \left.\frac{\partial  }{\partial t}   \langle F_t, \nu_t \rangle \right|_{t=0}  =  \varphi -  \langle F,  \nabla_\Sigma  \varphi \rangle.
\end{equation}   

To prove \eqref{eq:Lf_R}, we prepare the one-parameter family $\{ {\psi}_{t} :  {\mathbb{R}}^{n+1} \to {\mathbb{R}}^{n+1} \}_{t \in \mathbb{R}}$ of the unit speed rotations generated by $R$. For instance in $\mathbb{R}^3$, if $R = (-y,x,0)$ then we have:
\[
   \psi_{t} (x, y, z) = \left( \, \left(\cos t \right) x -  \left(\sin t \right) y, \, \left(\sin t \right) x + \left(\cos t \right) y, \, z \, \right), 
\]
with the initial velocity $R(x,y,z)   =(-y,x,0) =     \left. \frac{\partial  }{\partial t}   \left(  \psi_{t}  (x,y,z) \right) \right|_{t=0}$.
Introducing $\widehat{R}:= \frac{\partial   \psi_{t} }{\partial t} $ and taking the one-parameter family $\left\{{\mathcal{F}}_t := {\psi}_{t} \circ F \right\}_{t \in \mathbb{R}}$  of immersions, 
we obtain
\[
 \frac{\partial  }{\partial t}  {\mathcal{F}}_{t}   = { \left( \widehat{R} \circ F  \right) }^{\top} +  { \left( \widehat{R} \circ  F  \right) }^{\bot}  = { \left( \widehat{R} \circ  F  \right) }^{\top} +   \langle \widehat{R}  \circ F , {\nu}_{t}  \rangle  \, {\nu}_{t}.    
\]
In a space-time neighborhood, one can always find some (time-dependent) tangential diffeomorphism ${\Phi}_t$ with $\Phi_0 = \id$ so 
that the reparametrization $F_t :={\mathcal{F}}_{t} \circ {\Phi}_t$ satisfies the normal evolution 
\[
\frac{\partial {F}_{t}}{\partial t} = {\varphi}_{t} \, {\nu}_{t}  
\]
with ${\varphi}_{t}(p) = \langle \widehat{R}  \circ F,  {\nu}_{t} \rangle$.
Due to the rotational invariance of the soliton equation, we also meet
\[
0=   \frac{1}{H_t} + C \langle {F}_{t}, \nu_{t}  \rangle.
\]
 Notice that  ${\varphi} = {\varphi}_{0}=  \langle  {R} ,  {\nu} \rangle $ on $\Sigma$, so identity (1) of Lemma \ref{lma:Evolutions} and  (\ref{variation of support function}) imply
\[
  0 =  \left.\frac{\partial  }{\partial t}  \left(  \, \frac{1}{H_t} + C \langle {F}_{t}, \nu_{t}  \rangle \right)   \right|_{t=0} 
= - \frac{1}{H^2}  \left(\Delta_\Sigma  \varphi  + \abs{A}^2  \varphi  \right) +  C  \left( \varphi -  \langle F,  \nabla_\Sigma  \varphi \rangle \right),
\]
 which is equivalent to  ${\mathcal{L}}_{\Sigma} \varphi=0$ on the initial surface $\Sigma$. 

Next we prove \eqref{eq:L(1/H)}. Let $\mathbf{X}$ denote the position vector field in ${\mathbb{R}}^{n+1}$. 
We prepare the one-parameter family $\{ {\psi}_{t} :  {\mathbb{R}}^{n+1} \to {\mathbb{R}}^{n+1} \}_{t \in \mathbb{R}}$ of homotheties  
\[
   \psi_{t} (\mathbf{X}) = e^{t}\mathbf{X},
\]
with the initial velocity vector field  $\left. \frac{\partial  }{\partial t}   \left(  \psi_{t} \circ \mathbf{X}  \right)  \right|_{t=0}=  \mathbf{X}$.  
Introducing $\widehat{\mathbf{X}}:= \frac{\partial   \psi_{t} }{\partial t} $ and taking the one-parameter family $\left\{{\mathcal{F}}_t := {\psi}_{t} \circ F \right\}_{t \in \mathbb{R}}$ of the immersions, we obtain
\[
 \frac{\partial  }{\partial t}  {\mathcal{F}}_{t}   = { \left(  \widehat{\mathbf{X}} \circ  F  \right) }^{\top} +   \langle {\widehat{\mathbf{X}}}  \circ F, {\nu}_{t}  \rangle  \, {\nu}_{t}. 
\]
In a space-time neighborhood, we can find a time-dependent tangential diffeomorphism ${\Phi}_t$ with $\Phi_0 = \id$ so 
that $F_t :={\mathcal{F}}_{t} \circ {\Phi}_t$ satisfies the normal evolution 
$\frac{\partial {F}_{t}}{\partial t} = {\varphi}_{t} \, {\nu}_{t}$ 
with ${\varphi}_{t} = \langle  {\widehat{\mathbf{X}}}  \circ F,  {\nu}_{t} \rangle$. Due to the homothetical invariance of the soliton equation, we have
\[
 -  \frac{1}{H_t} =  C \langle {F}_{t}, \nu_{t}  \rangle.
\]
 Note that  ${\varphi} = {\varphi}_{0}=  \langle  F,  {\nu} \rangle $ on $\Sigma$. We finally have  
\[
  0 =  \left.\frac{\partial  }{\partial t}  \left(  \, \frac{1}{H_t} + C \langle {F}_{t}, \nu_{t}  \rangle \right)   \right|_{t=0} 
= - \frac{1}{H^2}  \left(\Delta_\Sigma  \varphi  + \abs{A}^2  \varphi  \right) +  C  \left( \varphi -  \langle F,  \nabla_\Sigma  \varphi \rangle \right).
\]
 \end{proof}

\begin{remark} \label{general equality} The  identities \eqref{eq:Lf_R} and \eqref{eq:L(1/H)} in Lemma \ref{kernel} may be deduced in several ways. For instance, we illustrate an another proof of equation \eqref{eq:L(1/H)} for the support function $\varphi  = \langle F,  \nu \rangle$. As the soliton equation reads $- \frac{1}{H}= C {\varphi}$, we find 
$ {\nabla}_{\Sigma} H = H^2  {\nabla}_{\Sigma}  \left( - \frac{1}{H} \right) = C H^2 {\nabla}_{\Sigma} \phi$ and $H= - CH^2 {\varphi}$.  
The general formula deduced in, for instance, \cite[Proposition 6]{ADR2007}, \cite[page 291]{Huisken1990}, and  \cite[page 2]{PRS2005}, 
\[
 0 = \Delta_\Sigma   \langle F,  \nu \rangle  +     \langle F, {\nabla}_{\Sigma} H \rangle  +   {\vert A \vert}^{2}     \langle F,  \nu \rangle + H,
\]
can be rewritten as 
\[
 0 = \Delta_\Sigma  {\varphi} + CH^2\left\langle F, \nabla_{\Sigma}  {\varphi} \right\rangle + \left(\abs{A}^2 - CH^2\right)  {\varphi}.
 \]
\end{remark}


 \section{Main result and proof}
 \label{sec:main}

In this section, we prove that a complete immersed self-expander to the inverse mean curvature flow, which has one end asymptotic to a cylinder, or has two ends asymptotic to two coaxial cylinders, must be rotationally symmetric. Our proof requires the notion of asymptocity, which guarantees that the two kernel functions on the self-expander, induced by rotations and dilations, uniformly converge to corresponding constant kernel functions on the asymptotic cylinder(s), respectively. We shall introduce the following geometric definition of cylindrical asymptoticity, which implies the analytic uniform convergence of the kernel functions. 
  
\begin{definition}  \label{asmp}
Let $F_{\text{cyl}}(\omega, z) : \mathbb{S}^{n-1} \times \mathbb{R} \to \mathbb{R}^{n+1}$ be the embedding of a round cylinder $F_{\text{cyl}}(\omega, z) = ( \rho \, \omega, z)$, for some $\rho>0$. An end $E$ of an immersion $F: \Sigma \to \mathbb{R}^{n+1}$ is said to be asymptotic to this cylinder when the immersion is a normal graph over the cylinder $F_{\text{cyl}}$ on $E$, i.e. there exists a function $u : \mathbb{S}^{n-1} \times [z_0,\infty) \to \mathbb{R}$, for some $z_0 \in \mathbb{R}$, such that
\[
F(\omega, z) = F_{\text{cyl}}(\omega,z) + u(\omega, z) \nu_{\text{cyl}}(\omega,z),
\]
where $\nu_{\text{cyl}}$ is the unit normal of the round cylinder, and  the following growth assumptions hold:
\begin{enumerate}[label=(a\arabic*)]
\item \label{growth1}  $\displaystyle  \underset{z \to \infty} \lim \left( \, \underset{\omega \in  \mathbb{S}^{n-1}} \sup  \vert F(\omega, z) - F_{\text{cyl}}(\omega,z) \vert \right) = 0$,
\item \label{growth2}   $\displaystyle  \underset{z \to \infty} \lim \left( \, \underset{\omega \in  \mathbb{S}^{n-1}} \sup  \vert {\nu}_{F}(\omega, z) - {\nu}_{\text{cyl}}(\omega,z) \vert  \right) = 0$,
\item \label{growth3}  $\displaystyle  \underset{z \to \infty} \lim \left( \, \underset{\omega \in  \mathbb{S}^{n-1}} \sup  \, \vert  \langle  F(\omega, z)  ,  {\nu}_{F}(\omega, z) \rangle -  \langle F_{\text{cyl}}(\omega, z)  ,  {\nu}_{\text{cyl}}(\omega, z)  \rangle \vert \, \right) = 0$,
\end{enumerate}
where ${\nu}_{F}(\omega, z)$ denotes the induced unit normal of the self-expander $F\left(\Sigma\right)$.
\end{definition}

\begin{remark}  \label{GnA}
We note the uniform growth $|F_{\text{cyl}}(\omega,z)| = \mathcal{O}\left( \vert z \vert \right)$ as $z \to \infty$ in Defintion \ref{asmp}. If we strengthen the growth assumption \ref{growth2} by 
\begin{equation} \label{growth2str}
\underset{z \to \infty} \lim \left( \, \vert z \vert \, \underset{\theta \in  \mathbb{S}^{n-1}} \sup  \vert {\nu}_{F}(\omega, z) - {\nu}_{\text{cyl}}(\omega,z) \vert \, \right) = 0, 
\end{equation}
then, by observing the pointwise estimation, at each point $(\omega, z)$,
\begin{equation} \label{g2str}
\vert  \langle  F   ,  {\nu}_{F}  \rangle -  \langle F_{\text{cyl}}   ,  {\nu}_{\text{cyl}}   \rangle \vert  \leq 
\vert  \langle  F - F_{\text{cyl}}  ,  {\nu}_{F}  \rangle \vert +  \vert  \langle   F_{\text{cyl}} , {\nu}_{F} - {\nu}_{\text{cyl}}   \rangle \vert 
\leq \vert   F - F_{\text{cyl}}   \vert +  \vert     F_{\text{cyl}} \vert \cdot \vert {\nu}_{F} - {\nu}_{\text{cyl}} \vert,
\end{equation}
we find that  combining \ref{growth1}  and  (\ref{growth2str}) yields the third growth assumption \ref{growth3}. 
\end{remark}

\begin{remark}
Instead of the \textit{geometric} growth conditions in Defintion \ref{asmp}, we are  also able to impose that the normal distance function $u(\omega,z)$ defined on the round cylinder satisfies the \textit{analytic} growth conditions:
\begin{enumerate}[label=(a\arabic*')]

\item \label{growth1'} $\displaystyle \underset{z \to \infty} \lim \left( \, \sup_{\omega \in \mathbb{S}^{n-1}} |u(\omega,z)| \right) = 0$,

\item \label{growth2'} For any  first order differential operator $D$ on $\mathbb{S}^{n-1}$, $\displaystyle \underset{z \to \infty} \lim \left( \, \sup_{\omega \in \mathbb{S}^{n-1}} |Du (\omega,z)| \right) = 0$,

\item \label{growth3'} $\displaystyle \underset{z \to \infty} \lim \left( \, \sup_{\omega \in \mathbb{S}^{n-1}} |z \, u_z(\omega,z)| \right) = 0$.

\end{enumerate}

\end{remark}

 Now, we are ready to prove the main result.

\begin{theorem}[\textbf{Rotational symmetry of self-expanders with asymptotically cylindrical ends}] \label{thm:main}
Let $F:\Sigma^{n \geq 2} \to \mathbb{R}^{n+1}$ be a complete, immersed self-expander to the inverse mean curvature flow, and suppose either:
\begin{itemize}
\item $F(\Sigma)$ has only one end which is asymptotic, in the sense of Defiition~\ref{asmp}, to a round cylinder; or
\item $F(\Sigma)$ has only two ends which are asymptotic, in the sense of Defiition~\ref{asmp}, to two coaxial round cylinders.
\end{itemize}
Then, $F(\Sigma)$ must be rotationally symmetric with respect to the axis of the asymptotic cylinder(s).
\end{theorem}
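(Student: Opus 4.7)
The plan is to combine the two kernel functions from Lemma~\ref{kernel} into a single ratio and apply a strong maximum principle argument. Because $\mathcal{L}_\Sigma$ has a nontrivial zeroth-order term $|A|^2-CH^2$, it is not directly suited to the maximum principle; however, the quotient of two solutions lying in its kernel satisfies an elliptic equation \emph{without} zeroth-order term, which is exactly the kind of equation the strong maximum principle handles.

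First I would fix the orientation. Since $\langle F,\nu\rangle=-1/(CH)$ is nowhere zero and, by Definition~\ref{asmp}, tends uniformly to the positive cylinder radius on each asymptotic end, one can orient $\Sigma$ so that $\langle F,\nu\rangle>0$ everywhere. Let $R$ be the rotation vector field about the common axis of the asymptotic cylinder(s), and set
\[
w \;:=\; \frac{\langle R,\nu\rangle}{\langle F,\nu\rangle}.
\]
Applying $\mathcal{L}_\Sigma$ and using $\mathcal{L}_\Sigma\langle R,\nu\rangle=\mathcal{L}_\Sigma\langle F,\nu\rangle=0$, a direct product-rule computation produces
\[
\Delta_\Sigma w \;+\; \biggl(CH^{2}F \;+\; \frac{2\,\nabla_\Sigma\langle F,\nu\rangle}{\langle F,\nu\rangle}\biggr)\cdot\nabla_\Sigma w \;=\; 0,
\]
a linear second-order elliptic equation with smooth coefficients and no zeroth-order term.

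Next I would verify the boundary behavior $w\to 0$ at every end. On each asymptotic round cylinder about the chosen axis one has $\langle R,\nu_{\text{cyl}}\rangle\equiv 0$ (since $R$ is tangent to the cylinder) and $\langle F_{\text{cyl}},\nu_{\text{cyl}}\rangle\equiv\rho>0$. Writing $F=F_{\text{cyl}}+u\,\nu_{\text{cyl}}$ and using that $R$ is linear in position,
\[
\langle R(F),\nu\rangle \;=\; \langle R(F_{\text{cyl}}),\,\nu-\nu_{\text{cyl}}\rangle \;+\; u\,\langle R(\nu_{\text{cyl}}),\,\nu\rangle.
\]
The asymptoticity conditions of Definition~\ref{asmp}, sharpened as in Remark~\ref{GnA} if necessary, force both terms to tend to zero uniformly in the cylindrical parameters, while $\langle F,\nu\rangle\to\rho$ by condition~\ref{growth3}; hence $w\to 0$ at every end. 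With this decay in hand I would close the argument by the strong maximum principle: since $w\to 0$ at infinity and $\Sigma$ is complete and connected, for each $\varepsilon>0$ the set $\{|w|\ge\varepsilon\}$ is pre-compact, so if $w\not\equiv 0$ then either $\sup w>0$ or $\inf w<0$ is attained at an interior point; the strong maximum principle applied to the displayed equation then forces $w$ to be a nonzero constant on $\Sigma$, contradicting $w\to 0$. Thus $w\equiv 0$, i.e., $\langle R,\nu\rangle\equiv 0$ on $\Sigma$. This says $R$ is everywhere tangent to the immersion, so its flow preserves $F(\Sigma)$, proving rotational symmetry about the axis.

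The main obstacle is the boundary-decay step: the rotation field $R$ grows linearly, so naively $|\langle R,\nu\rangle|\le|R(F_{\text{cyl}})|\cdot|\nu-\nu_{\text{cyl}}|=\mathcal{O}(|z|)\,|\nu-\nu_{\text{cyl}}|$, and one must control the product $|z|\cdot|\nu-\nu_{\text{cyl}}|$. This is precisely the content of the strengthened condition in Remark~\ref{GnA} (equivalently, of the analytic growth conditions involving $|z\,u_z|$), and tracking which version of asymptoticity is actually being invoked is the delicate bookkeeping step. By comparison, deriving the elliptic equation for $w$ and invoking the strong maximum principle at infinity are routine once $\langle F,\nu\rangle$ is uniformly positive, which is built into the choice of orientation.
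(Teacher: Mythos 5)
Your proposal is correct and follows essentially the same route as the paper: form the quotient $h=\langle R,\nu\rangle/\langle F,\nu\rangle$ of the two kernel functions from Lemma~\ref{kernel}, observe that it satisfies a drift-Laplacian equation with no zeroth-order term, show it decays to zero on the asymptotically cylindrical end(s), and apply the strong maximum principle to an interior extremum to force $h\equiv 0$, hence $\langle R,\nu\rangle\equiv 0$ and tangency of the rotation field. (The sign of the limit of $\langle F,\nu\rangle$ is a normal convention; the paper gets $-\rho$, but only nonvanishing and a nonzero uniform limit matter.)

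One point in your closing paragraph is off, and it is worth correcting because it would otherwise suggest the theorem needs hypotheses beyond Definition~\ref{asmp}. You claim $|R(F_{\text{cyl}})|=\mathcal{O}(|z|)$, so that the decay of $\langle R,\nu\rangle$ requires the strengthened condition \eqref{growth2str} of Remark~\ref{GnA}. That is not so: for a rotation about the cylinder's own axis, $R$ has no component along the axis and depends only on the distance to it, so on the cylinder $|R(F_{\text{cyl}})|=\rho$ is bounded (e.g.\ $R(\rho\cos\theta,\rho\sin\theta,z)=(-\rho\sin\theta,\rho\cos\theta,0)$). Your own displayed decomposition then gives
\[
|\langle R(F),\nu\rangle|\;\le\;\rho\,|\nu-\nu_{\text{cyl}}|\;+\;|u|\,|R(\nu_{\text{cyl}})|,
\]
which tends to zero using only \ref{growth1} and \ref{growth2}; the linear growth in $z$ and the strengthened condition of Remark~\ref{GnA} are relevant only to the \emph{support} function $\langle F,\nu\rangle$, whose limit is instead assumed directly in \ref{growth3}. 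This matches the paper, which notes that the convergence of $\langle R,\nu\rangle$ uses an estimate \emph{similar} to \eqref{g2str} but with the bounded field $R$ in place of $F_{\text{cyl}}$. Finally, for $n\ge 3$ there is no single rotation field: you must run the identical argument for the generator of every one-parameter subgroup of $\mathrm{SO}(n)$ fixing the axis (each of which is tangent to the model cylinder and bounded near it), and conclude invariance under the full rotation group, as the paper does at the end of its proof.
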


\begin{proof}
For simplicity, we first present the proof of the case $n=2$ with one end. The argument can be adapted, \emph{mutatis mutandis}, to show the two-end and the higher dimension cases. 

Let $\widetilde{\Sigma}$ denote the asymptotic cylinder of radius $\rho>0$ in $\mathbb{R}^3$. Since the inverse mean curvature flow is invariant under rigid motions in ${\mathbb{R}}^{3}$, without loss of generality, 
we can take the $z$-axis as the axis of the rotational symmetry of $\widetilde{\Sigma}$.
Suppose that the self-expander $F: \Sigma \to \mathbb{R}^{n+1}$ has an end $E$ which is asymptotic to the cylinder
$\sqrt{\,x^2 +y^2\,}=\rho>0, \; z \in [z_0, +\infty)$ as in Definition \ref{asmp}.
On this cylinder $\widetilde{\Sigma}$, the rotation function and the support function are given by $\langle R_{\widetilde{\Sigma}}, \nu_{\widetilde{\Sigma}}\rangle \equiv 0$ and $\langle F_{\widetilde{\Sigma}}, \nu_{\widetilde{\Sigma}}\rangle \equiv -\rho$. As the end $E$ of  $F(\Sigma)$ is asymptotic to $\widetilde{\Sigma}$, by letting $z \to \infty$ on $E$, we find that $\langle R, \nu\rangle$ converges uniformly to $0$, and $\langle F, \nu \rangle$ converges uniformly to $-\rho$. (The proof of uniform convergence of $\langle R, \nu\rangle$ at infinity uses a similar estimation to (\ref{g2str}) of Remark \ref{GnA}.) 
Recall that the support function $\langle F, \nu\rangle$ does not vanish on a self-expander to IMCF. In particular, $\langle F, \nu\rangle$ is uniformly bounded away from zero on $\Sigma$, and so the quotient:
\[h := \frac{\langle R, \nu\rangle}{\langle F, \nu\rangle}\]
converges uniformly to $0$ as $z \to \infty$. Hence, even though $F(\Sigma)$ is non-compact, we find that the bounded 
function $|h|$ must attain a finite global maximum value on $\Sigma$.

To show that $F(\Sigma)$ is rotationally symmetric, we prove that $h$ and hence $\langle R, \nu \rangle$ vanish on $\Sigma$. Seeking a contradiction, suppose $h$ is non-zero somewhere on $\Sigma$. As $h$ converges uniformly to $0$ at infinity, the quotient function $h$ will achieve either a positive global maximum value or a negative global minimum value on the whole surface $\Sigma$.
 By applying the quotient identity
\[
  {\Delta}_{\Sigma} \left( \frac{f}{g} \right) = \frac{ g    {\Delta}_{\Sigma} f - f   {\Delta}_{\Sigma} g    }{   g^2 }   - \frac{2}{g}  \left\langle  {\nabla}_{\Sigma} g , 
  {\nabla}_{\Sigma}  \left( \frac{f}{g} \right) \right\rangle
\]
with $f = \langle R, \nu\rangle$ and $g = \langle F, \nu\rangle$, and combining with the two identities \eqref{eq:Lf_R} and \eqref{eq:L(1/H)} in Lemma \ref{kernel}, we can easily derive the equality 
\[
{\Delta}_{\Sigma} h + \left\langle C H^2 F + \frac{2}{\langle F, \nu\rangle} \nabla_{\Sigma} \langle F, \nu \rangle, \; {\nabla}_{\Sigma} h \right\rangle = 0
\]
on $\Sigma$. Since the quotient function $h : \Sigma \to \mathbb{R}$ achieves either a finite global minimum or maximum value, the strong maximum (minimum) principle guarantees that $h$ must be a non-zero constant function. This contradicts the behavior of $h$ as $z \to \infty$.
Hence, $h \equiv 0$ and $\langle R, \nu \rangle \equiv 0$  on $\Sigma$.

Since $\langle R, \nu \rangle$ vanishes on $\Sigma$, the rotation vector field $R=\left(-y,  x, 0 \right) \vert_{\Sigma}$ is 
tangent to $F(\Sigma)$ everywhere.  As is well-known (for instance, \cite[Section 2]{ALP1999}), one can easily check that, for 
any given point $\left( x_0, y_0, z_0 \right) \in F(\Sigma)$, the 
maximal integral curve of the vector field $R$ passing through  $ \left( x_0, y_0, z_0 \right)$ must be the horizontal circle having the radius $\sqrt{{x_{0}}^{2} +{y_{0}}^{2}} \geq 0$ and the center located in the $z$-axis. Hence, the self-expander is invariant under the rotations with respect to the $z$-axis. 

The two-end case can be proved in the same way. If $E_+$ and $E_-$ are the only two ends of $\Sigma$ which are asymptotic to two coaxial cylinders, we still have $\langle R, \nu\rangle$ converges uniformly to $0$ on both ends $E_+$ and $E_-$, and that $\langle F, \nu\rangle$ is uniformly bounded away from $0$ on $\Sigma$. The same argument above shows the quotient $h$ and hence $\langle R, \nu\rangle$ vanish on the whole $\Sigma$.

In higher dimensions, the group of rotations about the axis of the round cylinder $\widetilde{\Sigma}^n := \mathbb{S}^{n-1} \times \mathbb{R}$ in $\mathbb{R}^{n+1}$ is isomorphic to $\text{SO}(n)$. Consider \emph{any} smooth 1-parameter family of diffeomorphisms $\psi_t$ in the rotation group of $\widetilde{\Sigma}$, and let $R$ be the vector field generated by $\psi_t$, i.e. $\frac{\partial\psi_t}{\partial t} = R \circ \psi_t$. Clearly, we have $\langle R_{\widetilde{\Sigma}^n}, \nu_{\widetilde{\Sigma}^n} \rangle = 0$ for the cylinder $\widetilde{\Sigma}^n$. Then, we can adapt the same argument in the $n = 2$ case to show $\langle R, \nu\rangle \equiv 0$ on $\Sigma$. This implies that $\frac{\partial\psi_t}{\partial t}$  is tangent to $F(\Sigma)$ and hence $\psi_t(F(p)) \in F(\Sigma)$ whenever $p \in \Sigma$. Therefore, $\psi_t$ is also in the isometry group of $F(\Sigma)$. Now we have shown that the self-expander $\Sigma$ is invariant under \emph{any} rotation $\psi_t$ about the axis of the asymptotic cylinder(s), and hence the self-expander must be rotationally symmetric.
\end{proof}

\begin{remark}
Recently, I. Castro and A. M. Lerma \cite{CL2015} investigated Lagrangian homothetic solitons for the inverse mean 
curvature flow. It would be interesting to explore the rigidity of solitons of higher codimension. 
\end{remark}

\textbf{Acknowledgement.}  The first and third named authors would like to thank Christine Breiner for discussion on 
the construction of new self-expanders to the inverse mean curvature flow. The second named author would like to 
thank Peter McGrath for some enlightening discussion.

\end{document}